\documentclass{amsart}
\usepackage[utf8]{inputenc}
\usepackage{comment}
\usepackage{amsmath}
\usepackage{mathtools}
\usepackage{}
\usepackage{graphicx}
\usepackage[colorlinks=true, allcolors=blue]{hyperref}
\usepackage{amsfonts}
\usepackage{amsthm}
\usepackage{newlfont}
\usepackage{amscd}
\usepackage{amsgen}
\usepackage{amssymb}
\usepackage{mathrsfs}	
\usepackage{longtable}
\usepackage{listings}
\usepackage{extarrows}
\usepackage{tikz}
\usepackage{tikz-cd}
\usepackage{verbatim}
\numberwithin{equation}{section}
\usepackage[all]{xy}
\usepackage{color}
\usepackage{amssymb}
\usepackage[left=3cm,right=3cm]{geometry}
\usepackage{tikz}
\usepackage{tikz-cd}
\usepackage{mathtools}
\usepackage{bm} 
\usetikzlibrary{matrix,shapes,arrows,decorations.pathmorphing}
\usepackage{calligra}
\usepackage{mathrsfs}
\usepackage{enumitem} 
\usepackage{float}
\restylefloat{table}
\usepackage{ytableau}
\usepackage{adjustbox}
\usetikzlibrary{calc,matrix,positioning}

\tikzset{ 
    table/.style={
        matrix of nodes,
        row sep=-\pgflinewidth,
        column sep=-\pgflinewidth,
        nodes={rectangle, text width=2.5em, text height = 1.5em, align=center},
        text depth=1.25ex,
        text height=2.5ex,
        nodes in empty cells
    },
}

\usepackage{comment}
\usepackage{todonotes}

\newcommand{\kieran}[1]{\todo[inline, size=\tiny, author=Kieran, backgroundcolor=blue!20]{#1}}


\let\blb\mathbb

\def\CC{{\blb C}}

\def\PP{{\blb P}}

\DeclareMathOperator{\Gr}{Gr}

\newcommand{\mQ}{\mathcal{Q}}
\newcommand{\mU}{\mathcal{U}}

\newcommand{\mE}{\mathcal{E}}
\newcommand{\mA}{\mathcal{A}}
\newcommand{\mK}{\mathcal{K}}

\DeclareMathOperator{\Sym}{Sym}
\newcommand{\of}{\mathcal{O}}
\newcommand{\W}{\bigwedge}

\newcommand{\mF}{\mathcal{F}}

\newtheorem{lemma}{Lemma}[section]
\newtheorem{proposition}[lemma]{Proposition}
\newtheorem{theorem}[lemma]{Theorem}
\newtheorem{corollary}[lemma]{Corollary}

\setcounter{MaxMatrixCols}{200}  

\theoremstyle{remark}

\newtheorem{remark}[lemma]{Remark}

\def\End{\operatorname{End}}

\def\Ext{\operatorname{Ext}}

\def\ch{\operatorname{ch}}

\DeclareMathOperator{\sHom}{\mathscr{H}\text{\kern -3pt {\calligra\large om}}\,}

\newcommand\quotient[2]{
        \mathchoice
            {
                \text{\raise1ex\hbox{$#1$}\Big/\lower1ex\hbox{$#2$}}%
            }
            {
                #1\,/\,#2
            }
            {
                #1\,/\,#2
            }
            {
                #1\,/\,#2
            }
    }

\newcounter{gensurf}
\setcounter{gensurf}{1}

\title{Examples of non-rigid, modular vector bundles on hyperk\"ahler manifolds}
\author{Enrico Fatighenti}
\address{\newline
Alma Mater studiorum Universit\`a di Bologna\hfill\newline
Dipartimento di Matematica\hfill\newline
Piazza di porta San Donato 5, 40126 Bologna, Italy}
\email[E.~Fatighenti]{enrico.fatighenti@unibo.it}

\begin{document}
\maketitle
\begin{abstract}
    We exhibit examples of slope-stable and modular vector bundles on a hyperk\"ahler manifold of K3$^{[2]}$-type which move in a 20-dimensional family and study their algebraic properties. These are obtained by performing standard linear algebra constructions on the examples studied by O'Grady of (rigid) modular bundles on the Fano varieties of lines of a general cubic 4-fold and the Debarre-Voisin hyperk\"ahler manifold. 
\end{abstract}

\section{Introduction}

The purpose of this short note is to produce examples of slope-stable modular vector bundles on hyperk\"ahler manifolds, that are not rigid. These objects are of interest since it is expected that they could lead to the construction of new explicit (possibly singular) examples of families of polarized hyperk\"ahler manifolds. In particular, one would like to generalize the theory of moduli spaces of sheaves on K3 and abelian surfaces, from which all the current known deformation types of hyperk\"ahler manifolds are constructed. The idea is to check if it is possible to take a suitable moduli space of modular sheaves on a higher-dimensional hyperk\"ahler manifold, and obtain in this way another hyperk\"ahler manifold. This idea has its roots in the work of Verbitsky, \cite{ver}, but it was properly formalized by O'Grady, \cite{og19}, and the theory has recently witnessed a large inflationary phase, see e.g. the works of \cite{be22,bot22,mo23, markman, og22}. 

The modularity condition for hyperk\"ahler (HK) manifolds of type K3$^{[2]}$ amounts to showing that the discriminant of a sheaf, see equation \ref{discr}, is a multiple of the second Chern class of the variety.
In \cite{og19}, O'Grady studied examples of such bundles on certain hyperk\"ahler varieties, (in this case also projectively hyperholomorphic, i.e. such that the projectivization of the bundle extends to all deformations of the base variety) which however were rigid. Very recently, in \cite{bot22} Bottini used a non-rigid vector bundle with 10-dimensional deformation space to construct a component of a moduli space equipped with a closed holomorphic two-form which is birational (and conjecturally isomorphic) to a hyperk\"ahler of OG10 type.

It, therefore, becomes interesting to produce examples of these types of vector bundles on hyperk\"ahler manifolds. This is the scope of this paper.
To be precise, we will show that starting from the modular slope-stable rigid bundles studied by O'Grady, and performing only standard linear algebra constructions, we can produce other modular slope-stable bundles which are not rigid. We suspect that this in fact should be the shadow of a more general phenomenon, and a recipe to produce many more bundles of this sort.

We proceed as follows: starting from the Fano variety $X$ of lines on a general cubic 4-fold, we consider $\mE$, the restricted rank 4 quotient bundle from the Grassmannian of two dimensional subspaces on a 6-dimensional vector space. This is slope-stable, modular but rigid, see \cite{og19}. We perform simple representation-theoretic constructions on $\mE$, obtaining two other bundles which we show to be still slope-stable and modular, but with a 20-dimensional first cohomology group of their endomorphism bundles. For the second of the two bundles, we also show that the traceless part of the self-$ext^2$ space is isomorphic to the second exterior power of $ext^1$, which amounts to the unobstructedness of the bundle itself, as in \cite[Thm 6.1]{be22} and \cite{mo23}. In short, our main results can be summarized as follows:
\begin{theorem}[See Thm \ref{thm:ext},\ref{recap1},\ref{thm:ext2}, \ref{prop:stability2}] \label{thm1.1}
Let $X \subset \Gr(2,6)$ be the Fano variety of lines on a general cubic fourfold. There exist two bundles $\mF$, $\mK$ on $X$, of rank (respectively) 6 and 15 which are slope-stable and modular. Their $ext$-cohomology dimension table is
\[ ext^p(\mF, \mF) \cong
    \begin{cases}
         20 \ (p=1,3) \\
        2 \ \ (p=2) \\
        1 \ \ (p=0,4) \\
        0 \ \ \textrm{otherwise}.
    \end{cases}\] and

    \[ ext^p(\mK, \mK) \cong
    \begin{cases}
         20 \ (p=1,3) \\
        191 \ \ (p=2) \\
        1 \ \ (p=0,4) \\
        0 \ \ \textrm{otherwise}.
    \end{cases}\]
    
Also, in the second case, if we consider the traceless part of the endomorphism bundle, we have the isomorphism
$$\W^2 \Ext^1(\mK, \mK) \cong H^2(End^0(\mK)).$$
\end{theorem}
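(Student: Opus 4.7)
The plan is to identify the bundles first: as the paper announces ranks $6$ and $15$ starting from the rank-$4$ bundle $\mE$, the only natural linear-algebra candidates are $\mF=\W^2\mE$ and $\mK=\End^0(\mE)=\mathfrak{sl}(\mE)$. For each of these one needs a common four-step attack: verify modularity by a Chern-class calculation, establish slope-stability, compute the full $\Ext^\bullet$-table, and in the second case identify the image of $\W^2\Ext^1$ inside $\Ext^2$.

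For modularity, I would start from O'Grady's identity $\Delta(\mE)\in\QQ\cdot c_2(X)$ and plug it into the standard formulas
$\ch(\W^2\mE)=\tfrac12\bigl(\ch(\mE)^2-\psi^2\ch(\mE)\bigr)$ and $\ch(\End^0\mE)=\ch(\mE)\ch(\mE^\vee)-1$, together with the classical identity $\Delta(A\otimes B)=\rk(B)^2\Delta(A)+\rk(A)^2\Delta(B)$. The discriminants of $\W^2\mE$ and $\End^0\mE$ then come out as explicit rational multiples of $\Delta(\mE)$ (modulo terms in $c_1(\mE)^2$, which do not disturb proportionality to $c_2(X)$), so modularity propagates. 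For slope-stability one uses the standard fact that Schur functors of a slope-stable bundle on a polarized projective variety are polystable in characteristic zero (via the Hermite--Einstein metric on $\mE$ inducing one on $\W^2\mE$ and $\End^0\mE$), and then excludes a splitting by a direct cohomological simplicity check, namely computing $h^0(\W^2\mE\otimes(\W^2\mE)^\vee)$ and $h^0(\End\mathfrak{sl}(\mE))$ and showing they equal $1$.

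The heart of the work is the $\Ext$-table computation. I would decompose $\mF\otimes\mF^\vee=\W^2\mE\otimes\W^2\mE^\vee$ and $\mK\otimes\mK^\vee=\mathfrak{sl}(\mE)\otimes\mathfrak{sl}(\mE)^\vee$ into $\GL(\mE)$-irreducible summands (Schur functors in $\mE$ and $\mE^\vee$), then compute each $H^i$ on $X$ via the Koszul resolution associated with the embedding $X\subset\Gr(2,6)$ as the zero locus of a global section of $\Sym^3\mU^\vee$, reducing everything to Borel--Weil--Bott on the Grassmannian. This is the principal obstacle: the number of summands appearing for $\End(\mK)$ is large (accounting for $\ext^2=191$), and many of the Bott diagrams require careful sign-tracking and Koszul-differential chasing to survive into the correct cohomological degree.

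For the final identification $\W^2\Ext^1(\mK,\mK)\cong H^2(\End^0(\mK))$, the natural candidate is the composition of the Yoneda square $\mu:\Ext^1(\mK,\mK)\otimes\Ext^1(\mK,\mK)\to\Ext^2(\mK,\mK)$, which is graded-commutative and hence skew-symmetric on degree-$1$ classes, with the trace-splitting projection $\Ext^2(\mK,\mK)\twoheadrightarrow H^2(\End^0(\mK))$. Thus $\mu$ factors through a map $\bar\mu:\W^2\Ext^1(\mK,\mK)\to H^2(\End^0(\mK))$ between spaces of equal dimension $\binom{20}{2}=190$, so it suffices to prove injectivity (or surjectivity). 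I would establish this by matching the Schur decomposition of the target, found in the $\Ext^2$-calculation above, with the corresponding representation-theoretic decomposition of $\W^2\Ext^1(\mK,\mK)$, checking that $\bar\mu$ is an isomorphism on each isotypic component. The unobstructedness statement then follows in the standard way: if every class in $H^2(\End^0(\mK))$ is of the form $\bar\mu(\alpha\wedge\beta)$, the Kuranishi obstruction map admits a primitive at every order, hence vanishes identically.
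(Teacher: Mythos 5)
Your overall strategy for the bulk of the theorem coincides with the paper's: decompose $\End(\mF)$ and $\End(\mK)$ into Schur functors of $\mQ$ and $\mU$, push the computation to $\Gr(2,6)$ via the Koszul resolution of $\of_X$ coming from $\Sym^3\mU^\vee$, apply Borel--Weil--Bott, and get stability from ``polystable (Schur functor of a stable bundle) $+$ simple $\Rightarrow$ stable''. One harmless discrepancy: the paper takes $\mK=\W^2\W^2\mE\cong\Sigma_{2,1,1}\mQ|_X$, whereas your candidate $\End^0(\mE)\cong\Sigma_{2,1,1}\mQ(-1)|_X$ differs from it by a twist by $\of_X(1)$; since every property claimed in the statement (rank, stability, modularity, the $\Ext$-table, the wedge-square isomorphism) is invariant under twisting by a line bundle, either choice proves the existence statement.

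There are, however, two genuine problems. First, your modularity argument leans on the parenthetical that leftover terms in $c_1(\mE)^2$ ``do not disturb proportionality to $c_2(X)$''. This is false: on the Fano variety of lines $h^2$ and $c_2(X)=-3h^2+8c_2(\mQ|_X)$ are linearly independent in $H^4(X,\QQ)$ (only the monodromy-invariant line spanned by $c_2(X)\propto q_X^\vee$ is allowed), so an uncancelled $c_1^2$ contribution would destroy modularity. The entire content of the paper's Proposition~\ref{lem:modularlambda} is precisely that the $\ch_1^2$ terms cancel \emph{exactly}, yielding $\Delta(\W^pF)=\lambda_p\Delta(F)$ on the nose; you must carry out that cancellation rather than dismiss it. (For your $\End^0(\mE)$ the analogous exact identity $\Delta(\End^0\mE)=2(r^2-1)\Delta(\mE)$ also holds, but note that $\Delta$ is not additive on direct sums, so splitting off the trivial summand needs its own one-line check.) Second, for the last assertion you propose to prove that the Yoneda square induces an isomorphism $\bar\mu:\W^2\Ext^1(\mK,\mK)\to H^2(\End^0(\mK))$ by ``checking isotypic components''; but $X$ is the Fano variety of a \emph{general} cubic, no group acts on it, and the $SL(V_6)$-representations $\W^3V_6^\vee$ and $\Sigma_{2,2,1,1}V_6^\vee\oplus\CC$ arise only as bookkeeping from Borel--Weil--Bott on the ambient Grassmannian, so there is no equivariance forcing $\bar\mu$ to respect that decomposition. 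The statement as written only asserts an abstract isomorphism, which --- as in the paper --- follows from computing both sides to be $\Sigma_{2,2,1,1}V_6^\vee\oplus\CC$ (dimension $190=\binom{20}{2}$); your stronger claim that $\bar\mu$ itself is an isomorphism is what unobstructedness really needs, but your proposed method does not establish it.
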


In the above theorem, the traceless part of the endomorphism bundle we denote the kernel of the natural trace map $End(\mF) \to \of_X$. In other words,  $\mF \otimes \mF^{\vee} \cong End^0(\mF) \oplus \of_X$, and similarly for $\mK$.

We are able to perform the same computation starting from the Debarre-Voisin hyperk\"ahler fourfold $Z \subset \Gr(6,10)$, and obtain exactly the same results. This is very likely not to be a coincidence, since the projectivization of these vector bundles should deform one into the other, see remark \ref{rmk:dv}.

We briefly explain the rationale behind performing these computations. It is a standard fact (see for example \cite[Section 4.5]{hl10}, that the tangent space to the moduli space at, say, $\mF$ is given by $\mathrm{Ext}^1(\mF, \mF)$, which in this case is equal to its traceless part. The obstructions space lies in the traceless part of $\mathrm{Ext}^2(\mF, \mF)$, i.e. $H^2(End^0(\mF))$. Thanks to the computations above, one has that if the symmetrized Yoneda pairing
\[
H^1(X, End^0(\mF)) \times H^1(X, End^0(\mF)) \to H^2(End^0(\mF))
\]
is zero, then there is a single component of the moduli space of stable sheaves on $X$ containing $\mF$, it has dimension 20 and it is smooth at $\mF$. This example (and others) are currently studied by O'Grady in a forthcoming work, in connection with a construction of locally complete, polarized families of $K3^{[10]}$. 
The second example $\mK$ is studied for similar reasons. In fact, even though in this case $ext^2$ is quite large, the last isomorphism in Theorem \ref{thm1.1}, together with recent formality results in \cite{mo23}, imply the smoothness of the Kuranishi space. 
 
The techniques used in this paper are representation-theoretic, and could be easily adapted to produce other examples. In fact, we expect that using similar tricks one should be able to produce many more examples of such bundles, and (possibly) produce explicit constructions of hyperk\"ahler manifolds.

\subsection*{Acknowledgments}
The present paper grew out of a question asked by Kieran O'Grady. I wish to thank him for this and the many discussions. I also thank in particular Francesco Meazzini and Claudio Onorati (who also introduced me to this notion), and also Simone Billi, Francesco Denisi, Lucas Li Bassi, Giovanni Mongardi, Arvid Perego and Andrea Petracci. The author was partially supported by PRIN2020 2020KKWT53, and is a member of INDAM-GNSAGA.

\subsection*{Notation}
With $X= (G, \mF)$ we mean that $X= V(s) \subset G$, for a general global section $s \in H^0(G, \mF)$. With $\Gr(k,n)$ we mean the Grassmannian of $k$-dimensional subspace in a $n$-dimensional complex vector space. We denote with $\mU$ its rank $k$ tautological bundle, $\mQ$ its rank $n-k$ quotient bundle. $V_n$ denotes a $n$-dimensional complex vector space. All varieties are assumed to be smooth and projective.
\section{The computation}

Set $X= (\Gr(2,6), \Sym^3 \mU^{\vee})$, i.e. $X$ is a hyperk\"ahler manifold of K3$^{[2]}$-type, in fact identified with the variety of lines on a generic cubic hypersurface $Y \subset \PP^5$, polarized by the Pl\"ucker line bundle. On $X$ there is a rank 4 vector bundle $\mE$, which is the restriction $\mQ|_X$, where $\mQ$ is the rank 4 quotient bundle on the Grassmannian $\Gr(2,6)$.

In \cite[Thm. 1.4]{og19} it is proved that this vector bundle $\mE$ is slope-stable, modular and rigid, meaning that $H^p(X, End^0(\mE))=0 $ for all $p$. In fact, one can prove directly that $\mE \otimes \mE^{\vee} \cong End^0(\mE) \oplus \of_X$, and one can check the acyclicity of the traceless part.

The above result led to the quest for examples of vector bundles on HK which are still modular and slope-stable, but not rigid. In fact, we are going to prove that if we perform simple representation-theoretic constructions on $\mE$, the resulting vector bundle is almost never rigid. We quote our first result here.
\begin{theorem} \label{thm:ext}
    Set $\mF =\W^2 \mE$ on $X$, using the above notation. Then $\mF$ is modular, but non-rigid. In particular, if we define $End^0(\mF)$ as $(\mF \otimes \mF^{\vee}) / \of_X$, we have \[ H^p(X, End^0(\mF)) \cong
    \begin{cases}
         20 \ (p=1,3) \\
        1 \ \ (p=2) \\
        0 \ \ \textrm{otherwise}.
    \end{cases}\]
\end{theorem}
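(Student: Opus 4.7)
The plan is to reduce the cohomology of $End^0(\mF)$ to that of a single Schur summand on the ambient Grassmannian, evaluate it by a Koszul-plus-Borel--Weil--Bott argument, and handle modularity separately by a Chern-class computation.

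First I would decompose $\mF \otimes \mF^\vee$ into irreducibles. Since $\mE$ has rank $4$, one has $\W^2 \mE^\vee \cong \W^2 \mE \otimes (\det \mE)^{-1}$, and the standard $GL_4$-plethysm
$$\W^2 \mE \otimes \W^2 \mE \cong S^2(\W^2 \mE) \oplus \W^2(\W^2 \mE) \cong S_{(2,2)}\mE \,\oplus\, \det \mE \,\oplus\, S_{(2,1,1)}\mE,$$
together with $S_{(2,1,1)}\mE \otimes (\det \mE)^{-1} \cong End^0(\mE)$ and $S_{(2,2)}\mE \otimes (\det \mE)^{-1} =: S_{(1,1,-1,-1)}\mE$, yields
$$End^0(\mF) \cong S_{(1,1,-1,-1)}\mE \,\oplus\, End^0(\mE).$$
Since $\mE$ is rigid by \cite{og19}, $End^0(\mE)$ is acyclic on $X$ and the entire cohomology of $End^0(\mF)$ comes from the $20$-dimensional Schur bundle $S_{(1,1,-1,-1)}\mE$.

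Next I would compute $H^\bullet(X, S_{(1,1,-1,-1)}\mE)$ via the Koszul resolution of $\of_X$ coming from $X = V(s) \hookrightarrow \Gr(2,6)$ for $s \in H^0(\Sym^3 \mU^\vee)$. Tensoring with $S_{(1,1,-1,-1)}\mQ$ produces a hypercohomology spectral sequence
$$E_1^{p,q} = H^q\bigl(\Gr(2,6),\, S_{(1,1,-1,-1)}\mQ \otimes \W^{-p}(\Sym^3 \mU^\vee)^\vee\bigr) \Longrightarrow H^{p+q}\bigl(X, S_{(1,1,-1,-1)}\mE\bigr).$$
Each Koszul factor decomposes by plethysm into a sum of $S_\lambda \mU$, so every $E_1$-entry is a direct sum of homogeneous bundles $S_\lambda \mU \otimes S_\mu \mQ$ on $\Gr(2,6)$ whose cohomology is read off by Borel--Weil--Bott. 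Most weights are singular after shift by $\rho$ and contribute $0$; the surviving terms should assemble into $h^1 = h^3 = 20$ and $h^2 = 1$. Serre duality on $X$ (using $K_X \cong \of_X$ and self-duality of $End^0(\mF)$) enforces $h^p = h^{4-p}$, and a Hirzebruch--Riemann--Roch computation of $\chi(End^0(\mF))$ provides an independent cross-check on $h^2$.

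For modularity I would use the Chern roots of $\W^2$ to express $c_1(\mF) = 3 c_1(\mE)$ and write $c_2(\mF)$ as a polynomial in $c_1(\mE)^2$ and $c_2(\mE)$. Substituting into the discriminant $\Delta(\mF) = 2 \rk(\mF) c_2(\mF) - (\rk(\mF)-1) c_1(\mF)^2$ and invoking the relation $\Delta(\mE) \in \QQ \cdot c_2(X)$ from \cite{og19} exhibits $\Delta(\mF)$ as a rational multiple of $c_2(X)$, i.e.\ $\mF$ is modular. The main obstacle will be the Borel--Weil--Bott bookkeeping: $S_{(1,1,-1,-1)}\mQ$ has non-dominant weights and must be rewritten, e.g.\ as $S_{(2,2)}\mQ \otimes \det \mU$, so that each Koszul summand can be expanded into dominant weights before Bott's algorithm is applied; the small final numbers $(20,1,20)$ reflect substantial cancellation, and the difficulty lies in tracking the $\rho$-shifts cleanly enough to land exactly on this pattern rather than on a messier total.
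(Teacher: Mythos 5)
Your proposal follows essentially the same route as the paper: the decomposition you write, $End^0(\mF) \cong S_{(1,1,-1,-1)}\mE \oplus End^0(\mE)$, is (after the identification $\det\mE = \of_X(1)$) exactly the paper's splitting into $\Sigma_{2,2}\mQ(-1)|_X \oplus \Sigma_{2,1,1}\mQ(-1)|_X$, the cohomology is then computed by the same Koszul-resolution-plus-Borel--Weil--Bott spectral sequence on $\Gr(2,6)$, and the modularity argument via Chern roots of $\W^2$ is precisely the paper's Lemma on $\ch_i(\W^pF)$ and the resulting identity $\Delta(\W^pF)=\lambda_p\Delta(F)$. One small efficiency on your side: you dispose of the $\Sigma_{2,1,1}\mQ(-1)|_X$ summand by quoting O'Grady's rigidity of $\mE$, whereas the paper reruns the Koszul/BBW argument on that summand as well; your shortcut is legitimate since that summand \emph{is} $End^0(\mE)$. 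The one place where your write-up stops short of a proof is the assertion that the surviving BBW terms "should assemble into" $(20,1,20)$: the actual content of the theorem is the identification of exactly three non-acyclic Koszul terms, namely $H^6$ of $\Sigma_{2,2}\mQ\otimes\Sym^3\mU(-4)$ and $H^2$ of $\Sigma_{2,2}\mQ\otimes\Sym^3\mU(-1)$ (each $\cong \W^3V_6^\vee$, dimension $20$) and $H^4$ of $\Sigma_{2,2}\mQ(-4)$ ($\cong\CC$), sitting in positions $(q,p)=(3,6),(1,2),(2,4)$ so that the spectral sequence degenerates with one contribution per total degree $1,2,3$; without exhibiting these the values $h^1=h^3=20$, $h^2=1$ are not established, though your Euler-characteristic and Serre-duality cross-checks would catch an error.
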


\begin{remark} Since $\W^2\mQ$ is irreducible, then is slope-stable \cite{ram66,um78}. However, one has to prove the slope-stability of the restriction, while the simpleness that can be checked directly. In fact $\mF \otimes \mF^{\vee}$ decomposes as the direct sum of $End^0(\mF) \oplus \of_X$. In particular, the full ext table of $\mF$ is
\[ ext^p(X, \mF) \cong
    \begin{cases}
         20 \ (p=1,3) \\
        2 \ \ (p=2) \\
        1 \ \ (p=0,4) \\
        0 \ \ \textrm{otherwise}.
    \end{cases}\]

This agrees with the Euler characteristic computation, which can be done using for example Riemann-Roch, and that shows $\chi(End(\mF))=-36$.
\end{remark}

Before checking the cohomology computation, we want to show that $\mF$ is modular. In order to do this, we use the following Proposition \ref{lem:modularlambda}. We first recall a definition. Given a rank $r$, torsion-free sheaf $F$ on a manifold $Z$, the discriminant $\Delta(F)$ of $F$ is defined as
\begin{equation}\label{discr}
    \Delta(F)=2rc_2(F)-(r-1)c_1(F)^2= -2r \ch_2(F) +\ch_1^2(F).
\end{equation}

As in \cite[Definition 1.1]{og19} a torsion-free coherent sheaf $F$ on a hyperk\"ahler of dimension $2n$ is said to be modular if there exists a rational number $d(F)$ such that for all $\alpha \in H^2(X)$ one has 
\[
\int_X \Delta(F) \cdot \alpha^{2n-2}=d(F) \cdot (2n-3)!! \cdot q_X(\alpha)^{n-1},
\]
with $q_X$ the Beauville-Bogomolov-Fujiki quadratic form.

Following \cite[Remark 1.3 and Section 2.1]{og19}, it follows that on a hyperk\"ahler manifold of $K3^{[2]}$-type $F$ is modular if and only if $\Delta(F)$ is a multiple of $c_2(X)$. Since this holds for $\mE$ on $X$, then $\mE$ is modular. The following Proposition shows that the same holds for $\mF$.

\begin{proposition} \label{lem:modularlambda}
Let $F$ be a torsion-free coherent sheaf of rank $r$ on a smooth projective variety $Z$. Then
\[
\Delta(\W^pF)= \lambda_p \Delta(F),
\]
where 
\[
\lambda_p:=\frac{1}{p-1} {r-1 \choose p} {r-2 \choose p-2}
\]
In particular, $\mF$ on $X$ defined as above is modular. 
\end{proposition}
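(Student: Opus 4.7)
The strategy is to reduce to a universal identity in the Chow ring via the splitting principle and then match coefficients against $\Delta(F)=-2r\,\ch_2(F)+\ch_1(F)^2$. Writing the Chern roots of $F$ as $\alpha_1,\dots,\alpha_r$, the bundle $\W^p F$ has rank $\binom{r}{p}$ and its Chern roots are the sums $\sum_{i\in S}\alpha_i$ indexed by $p$-subsets $S\subset\{1,\dots,r\}$.

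A direct count gives $\ch_1(\W^p F)=\binom{r-1}{p-1}c_1(F)$, since each $\alpha_i$ belongs to exactly that many $p$-subsets. Expanding $(\sum_{i\in S}\alpha_i)^2$ and summing over $S$, each diagonal term $\alpha_i^2$ picks up the multiplicity $\binom{r-1}{p-1}$ while each cross term $\alpha_i\alpha_j$ picks up $\binom{r-2}{p-2}$, yielding $\ch_2(\W^p F)=\binom{r-1}{p-1}\ch_2(F)+\binom{r-2}{p-2}c_2(F)$.

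Substituting this into the definition of $\Delta$ and rewriting $\ch_2(F)=\tfrac12(c_1(F)^2-2c_2(F))$, the discriminant $\Delta(\W^pF)$ becomes an integer combination of $c_1(F)^2$ and $c_2(F)$; two applications of Pascal's identity then collapse it to
\[\Delta(\W^p F)=-\binom{r-1}{p-1}\binom{r-1}{p}c_1(F)^2+2\binom{r}{p}\binom{r-2}{p-1}c_2(F).\]
Comparing with $\lambda_p\Delta(F)=-\lambda_p(r-1)c_1(F)^2+2\lambda_p r\,c_2(F)$, the $c_1^2$-coefficient pins down $\lambda_p=\binom{r-1}{p-1}\binom{r-1}{p}/(r-1)$, and the $c_2$-coefficient returns the same value thanks to the binomial identity $r\binom{r-1}{p-1}\binom{r-1}{p}=(r-1)\binom{r}{p}\binom{r-2}{p-1}$, which is immediate from the absorption rules $\binom{r}{p}=\tfrac{r}{p}\binom{r-1}{p-1}$ and $\binom{r-2}{p-1}=\tfrac{r-p}{r-1}\binom{r-1}{p-1}$. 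The elementary rearrangement $\binom{r-1}{p-1}/(r-1)=\binom{r-2}{p-2}/(p-1)$ then rewrites the answer in the stated closed form $\lambda_p=\tfrac{1}{p-1}\binom{r-1}{p}\binom{r-2}{p-2}$.

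The only substantive point is the compatibility of the two coefficient identities, which boils down to a single binomial manipulation; everything else is routine splitting-principle bookkeeping. For the closing assertion about $\mF=\W^2\mE$, we already know from \cite{og19} that $\Delta(\mE)$ is a multiple of $c_2(X)$, so the proposition above immediately transfers this property to $\mF$ via the scalar $\lambda_2$, proving modularity.
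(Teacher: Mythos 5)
Your argument is correct and follows essentially the same route as the paper: both compute $\ch_1(\W^pF)$ and $\ch_2(\W^pF)$ via the splitting principle (your intermediate expression $\ch_2(\W^pF)=\binom{r-1}{p-1}\ch_2(F)+\binom{r-2}{p-2}c_2(F)$ is exactly the paper's equation \eqref{eqn2splitting}) and then substitute into the definition of $\Delta$. The only difference is cosmetic: you organize the final simplification as a coefficient match in the basis $\{c_1(F)^2, c_2(F)\}$ resolved by Pascal and absorption identities, whereas the paper expands the binomial coefficients into factorials and simplifies directly; both yield the same $\lambda_p$.
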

In order to prove the above proposition, we will use the following auxiliary Lemma, which we highlight separately since it could be useful for further computations:

\begin{lemma}\label{lem:chernwedge}
    Let $F$ be as above. The following formulae hold:
    \begin{itemize}
        \item $\ch_1(\W^p F)= {r-1 \choose p-1}\ch_1(F)$; 
        \item $\ch_2(\W^pF) = \frac{1}{2} {r-2 \choose p-2} \ch_1^2(F) + {r-2 \choose p-1 }\ch_2(F)$.
    \end{itemize}
\end{lemma}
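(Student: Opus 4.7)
\medskip

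The plan is a direct computation via the splitting principle. Write the formal Chern roots of $F$ as $x_1,\dots,x_r$, so that $\ch(F)=\sum_i e^{x_i}$. The Chern roots of $\W^p F$ are then the $\binom{r}{p}$ sums $x_{i_1}+\dots+x_{i_p}$ indexed by $p$-element subsets $S\subset\{1,\dots,r\}$, hence
\[
\ch(\W^p F)=\sum_{|S|=p}\exp\Bigl(\sum_{i\in S} x_i\Bigr).
\]
I would then expand the exponentials to order two and collect terms in degree $1$ and degree $2$.

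For the degree $1$ part, the coefficient of each $x_i$ counts how many $p$-subsets of $\{1,\dots,r\}$ contain $i$, namely $\binom{r-1}{p-1}$. This gives the first formula at once:
\[
\ch_1(\W^p F)=\binom{r-1}{p-1}\sum_i x_i=\binom{r-1}{p-1}\ch_1(F).
\]
For the degree $2$ part, one expands $(\sum_{i\in S} x_i)^2=\sum_{i\in S} x_i^2+2\sum_{\{i,j\}\subset S} x_i x_j$ and sums over $S$. Each $x_i^2$ appears in $\binom{r-1}{p-1}$ subsets, and each unordered pair $x_i x_j$ (with $i\neq j$) in $\binom{r-2}{p-2}$ subsets. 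Rewriting $\sum_i x_i^2=2\ch_2(F)$ and $2\sum_{i<j} x_i x_j=\ch_1(F)^2-2\ch_2(F)$, one obtains
\[
\ch_2(\W^p F)=\binom{r-1}{p-1}\ch_2(F)+\binom{r-2}{p-2}\Bigl(\tfrac{1}{2}\ch_1(F)^2-\ch_2(F)\Bigr).
\]

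Finally I would invoke Pascal's identity $\binom{r-1}{p-1}-\binom{r-2}{p-2}=\binom{r-2}{p-1}$ to collapse the coefficient of $\ch_2(F)$ and obtain the stated formula. The argument is entirely formal and symmetric in the $x_i$, so the only real bookkeeping is the two subset-counting identities above; there is no genuine obstacle, and the proof is essentially a careful rearrangement.
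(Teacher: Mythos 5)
Your proposal is correct and follows essentially the same route as the paper: writing $F$ formally as a sum of line bundles, expanding $\ch(\W^p F)$ over $p$-subsets of Chern roots, counting that each $x_i$ (and $x_i^2$) occurs $\binom{r-1}{p-1}$ times and each pair $x_ix_j$ occurs $\binom{r-2}{p-2}$ times, and then converting $\sum_{i<j}x_ix_j$ into $\tfrac12\ch_1^2(F)-\ch_2(F)$ before collapsing the $\ch_2$ coefficient via Pascal's identity. There is no gap; the bookkeeping matches the paper's line by line.
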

\begin{proof}
The first part follows immediately from the splitting principle, as we are going to show. Similar formulae, although less explicit, can be also extracted from \cite{dra}. If we write formally $F$ as $\bigoplus_{i=1}^r L_i$, we have $\W^p F= \bigoplus_{i_1 < \ldots < i_p} L_{i_1} \otimes \ldots\otimes L_{i_p}$, hence taking the Chern character, \begin{equation} \label{chernsplitting}   
\ch(\W^pF)= \sum_{i_1 < \ldots < i_p}\ch(L_{i_1}) \cdot \ldots \cdot \ch(L_{i_p}) 
\end{equation}
If we extract the degree 1 component, and denote with $x_i=c_1(L_i)$, we have that every $x_i$ appears in the sum exactly ${r-1 \choose p-1}$ times, i.e.
\[
\ch_1(\W^p F)= {r-1 \choose p-1}(x_1 +\ldots +x_r)= {r-1 \choose p-1} \ch_1(F).
\]

For the second formula, we extract the second degree component from equation \ref{chernsplitting}. We have
\[
\ch_2(\W^pF)= \sum_{i_1 < \ldots < i_p} \left[ \left(\frac{x_{i_1}^2}{2} + \ldots + \frac{x_{i_p}^2}{2}\right) + \sum_{1 \leq h <k\leq p} x_{i_h}x_{i_k}\right]
\]
We can rewrite this as
\begin{equation} \label{eqn2splitting}
    \ch_2(\W^pF)={r-1 \choose p-1}\ch_2(F) + {r-2 \choose p-2} \sum_{i<j} x_ix_j.
\end{equation}
Notice that the last term is simply $c_2(F)$ up to a multiple. Hence, we can use the defining relation for the second Chern character in terms of the Chern classes, i.e. 
\[
\frac{1}{2} {r-2 \choose p-2} \ch_1^2(F) -{r-2 \choose p-2}\ch_2(F)= {r-2 \choose p-2} \sum_{i<j}x_ix_j.
\]
We can therefore substitute this in equation \ref{eqn2splitting} and rewrite
\[ \ch_2(\W^pF)=\frac{1}{2} {r-2 \choose p-2} \ch_1^2(F) + {r-2 \choose p-1 }\ch_2(F),\]
which concludes the proof.

\end{proof}
We now use Lemma \ref{lem:chernwedge} to prove Proposition \ref{lem:modularlambda}.

\begin{proof}[Proof of Proposition \ref{lem:modularlambda}]
We expand the definition of $\Delta(\W^pF) =\ch_1^2(\W^pF)-2 {r \choose p} \ch_2 (\W^pF)$. Using Lemma \ref{lem:chernwedge}, we obtain
\[
\Delta(\W^p F) = {r-1 \choose p-1}^2 \ch_1^2(F) -2 {r \choose p}\left[\frac{1}{2} {r-2 \choose p-2} \ch_1^2(F) + {r-2 \choose p-1 }\ch_2(F)\right].
\]
We can therefore expand the right-hand side of the above equation as
\[
\frac{(r-1)^2(r-2)^2 \ldots (r-p+1)^2}{(p-1)^2 (p-2)!^2} \ch_1^2(F)+\]\[ -2 \frac{r (r-1)(r-2) \ldots (r-p+1)}{p(p-1)(p-2)!} \left[ \frac{(r-2) \ldots (r-p+1)}{2 (p-2)!}\ch_1^2(F) + \frac{(r-2) \ldots (r-p+1)(r-p)}{(p-1)(p-2)!}\ch_2(F) \right]
\]
which further simplifies to
\[
\frac{(r-1)(r-2)^2 \ldots (r-p+1)^2}{(p-1)(p-2)!^2} \left[ \left( \frac{r-1}{p-1}- \frac{r}{p} \right) \ch_1^2(F)  - 2 \frac{r(r-p)}{p(p-1)} \ch_2(F)\right],
\]
which again is equal to
\[
\frac{1}{(p-1)} {r-1 \choose p} {r-2 \choose p-2} \left[ \ch_1^2(F)-2r \ch_2(F)\right]= \frac{1}{p-1} {r-1 \choose p} {r-2 \choose p-2} \Delta(F),
\]
which concludes the proof.    
\end{proof}

\begin{remark}
    We are inclined to believe that a similar formula should hold for any Schur power $\Sigma_{\alpha}F$, but extracting a general formula using these techniques might be quite tricky. One good exercise could be to extract the coefficient for the symmetric power $\Sym^p(F)$. 
    For example, for the second symmetric power, one has that
    \[
\ch_1(\Sym^2 F) =(r+1) \ch_1(F); \ \ \ch_2(\Sym^2 F)= \frac{1}{2}\ch_1^2(F)+(r+2) \ch_2(F);
    \]
    hence $\Delta(\Sym^2 F)= {r+2 \choose 2} \Delta(F)$.
    
    Notice that in the locally free case the result for the second symmetric power is immediately implied by the above result, together with the fact that the tensor product of modular sheaf is modular, see \cite[Remark 2.1]{og19}.
\end{remark}

Using the same techniques, we give a small corollary which could be useful in future computations. In fact, while in general the discriminant behaves badly with respect to the direct sum, we can obtain some positive results in some special cases.

\begin{lemma} Let $E$ be a torsion-free coherent sheaf of rank $r$ on a smooth projective variety $Z$. One has
  $\Delta(E^{\oplus n})= n^2 \Delta(E)$.
\end{lemma}
\begin{proof}
    It suffices to expand the definition of $\Delta$. We do the $n=2$ case first. Consider first $E \oplus F$: we have
    \[
    \Delta(E \oplus F) = (\ch_1(E) + \ch_1(F))^2-2(r_E+ r_F) (\ch_2(E)+\ch_2(F))=\]\[= \Delta(E) + \Delta(F)+2 \ch_1(E)\ch_1(F)- 2r_F \ch_2(E)- 2r_E \ch_2(F).
    \]
    If now we take $E=F$, we have
\[
\Delta(E^{\oplus 2})= 2 \Delta(E) +2(\ch_1^2(E)-2r \ch_2(E))=4 \Delta(E).
\]

In general we may proceed by induction: we compute in fact
\[\Delta(E \oplus E^{\oplus n})= \ch_1^2(E \oplus E^{\oplus n})-2(n+1)r \ch_2(E \oplus E^{\oplus n})\]
We expand this sum and we use additivity of the Chern character to obtain
\[
\left[ \ch_1^2(E^{\oplus n})-2nr \ch_2(E^{\oplus n})\right]+ \left[ \ch_1^2(E)-2r \ch_2(E)\right] + 2n \ch_1^2(E) -4nr \ch_2(E).
\]
By inductive hypothesis, the latter is equal to
\[
\Delta(E) (n^2+1+2n)= (n+1)^2 \Delta(E),
\]
which concludes the proof.
\end{proof}

We are now in position to prove Theorem \ref{thm:ext}. Thanks to the above Propositions \ref{lem:modularlambda}, we will only need to check the cohomological computation and the stability. 
Before this, we decompose $\mF \otimes \mF^\vee$ in irreducible summands, using Littlewood-Richardson rule, see e.g. \cite{wey03}. We have in fact:
\begin{equation} \label{decomposition}
    \mF \otimes \mF^\vee \cong \Sigma_{2,2}\mQ(-1)|_X \oplus \Sigma_{2,1,1}\mQ (-1)|_X \oplus \of_X,
\end{equation}
where with $\Sigma_\alpha \mQ$ we denote the Schur functor associated to the partition $\alpha$ applied to $\mQ$.
 The first two factors are therefore the irreducible summands of $End^0(\mF)$.
Although the decomposition given in equation \ref{decomposition} might seem a bit obscure at a first sight, it is in fact quite natural. In fact, $\Sigma_{2,1,1}\mQ$ is nothing but $\W^2 \W^2 \mQ$, and of course $\Sigma_{2,2}\mQ|_X \oplus \of_X$ is nothing but $\Sym^2 \W^2 \mQ|_X$. Therefore, we can rewrite the decomposition in equation \ref{decomposition} as 
\begin{equation} \label{symwedgedecomposition}
    \mF \otimes \mF^\vee \cong (\Sym^2\mF \oplus \W^2 \mF)_0 \otimes det(\mU) \oplus \of_X,
\end{equation} where the first term on the right hand side denotes the traceless part of the symmetric and skew-symmetric components of the endomorphism bundle.

\begin{remark}
   Notice that all the summands of the decomposition in equation \ref{decomposition} are self-dual, in the sense that $G_i \cong G_i^{\vee}$. This can be checked directly at the level of the ambient Grassmannian, using the classical rules of duality for Schur functors, see \cite[Section 2.5]{Ku95} The same phenomenon happens for $\mE$, whose endomorphism bundle decomposes as $\Sigma_{2,1,1}\mQ(-1)|_X \oplus \of_X$, both of which are self-dual.
\end{remark}

\begin{proof}[Proof of Theorem \ref{thm:ext}]
    The main tools for this proof are the Koszul complex for $X$ in $\Gr(2,6)$ and the Borel--Bott--Weil theorem, for computing the cohomology groups of homogeneous vector bundles on homogeneous varieties. For a general introduction we refer to \cite[4.1.4]{wey03}, and \cite{dft} for examples of similar computations. The strategy goes as follows: the normal bundle of $X$ in $\Gr(2,6)$ is the restriction to $X$ of $\Sym^3 \mU^\vee$: hence the Koszul complex for $X$ is 
    \begin{equation} \label{koszul}
        0 \to det (\Sym^3 \mU) \to \W^3 (\Sym^3 \mU) \to \W^2 (\Sym^3 \mU) \to \Sym^3 \mU \to \of_{\Gr(2,6)} \to \of_X \to 0
    \end{equation}

However, the factors in the above complex are not all irreducible, and we need to compute their decomposition using Littlewood-Richardson, namely:
\begin{equation} \label{koszulLR}
     0 \to \of(-6) \to  (\Sym^3 \mU) (-3)  \to (\Sym^4 \mU)(-1) \oplus \of(-3) \to \Sym^3 \mU \to \of_{\Gr(2,6)} \to \of_X \to 0.
\end{equation}

In fact, one can derive the above decomposition using duality to compute the third exterior power, and formula for tensor products of representations and the second symmetric power of a symmetric power, see for instance \cite[Corollary 2.3.5]{wey03}, \cite[Example 1.8.9]{mac}.
    
    Hence, if we want to compute the cohomology groups of $End^0(\mF)$ it will suffice to tensor the above complex \ref{koszulLR} with the first two summands of the decomposition of $End(\W^2 \mQ)$. The proof of this theorem is quite long and involved: hence, we will divide it in several steps. 
    To simplify the notation, we will call the vector bundles
    \[
\mA_1:= \Sigma_{2,2}\mQ(-1);
    \]
        \[
\mA_2:= \Sigma_{2,1,1}\mQ(-1);
    \]
    such that $End^0(\mF) \cong \mA_1|_X \oplus \mA_2 |_X$.

\subsubsection*{Step 1: $\mA_1$ and $\mA_2$ are acyclic}
    This is a simple application of the Borel-Bott-Weil (BBW) rule on $\Gr(2,6)$. To be precise, to $\mA_1$ we associate the partition $a_1=(2,2,0,0,1,1)$ and to  $a_2=\mA_2$ we associate the partition $(2,1,1,0,1,1)$, where we are using the convention that to an irreducible homogeneous bundle $\Sigma_\alpha \mQ \otimes \Sigma_\beta \mU$ is associated the partition $(\alpha, \beta)$. Let $\delta$ be $\delta=(5,4,3,2,1,0)$. Then by the BBW theorem, $\mA_i$ will be acyclic if the vectors $a_i+\delta$ contain repeated entries. This is the case for both factors. This means that the non-zero cohomology appearing in Theorem  \ref{thm:ext} will come from a necessary and detailed analysis of the cohomologies from complex \ref{koszulLR}.

\subsubsection*{Step 2: $\mA_2|_X$ is acyclic}
 In order to compute the cohomology of $\mA_2|_X$ we first need to tensor complex \ref{koszulLR} with $\mA_2$.

We get the following complex, after tensoring with $\mA_2$:

\begin{equation}\label{kosza1}
 0 \to \Sigma_{2,1,1}\mQ (-7) \to  \Sigma_{2,1,1}\mQ \otimes (\Sym^3 \mU)(-4) \to  \Sigma_{2,1,1}\mQ \otimes ((\Sym^4 \mU)(-2)  \oplus \of(-4) ) \to\end{equation}
\[\to   \Sigma_{2,1,1}\mQ \otimes (\Sym^3 \mU)(-1) \to \mA_2 \to \mA_2 |_X \to 0.
\]

 Since all of these bundles are of the form $\Sigma_{2,1,1}\mQ \otimes \Sigma_{\beta_1, \beta_2} \mU$, any factor with $\beta_1 \in \lbrace 6,4,3,1 \rbrace$ or $\beta_2 \in \lbrace 7,5,4,2 \rbrace$ will be acyclic, since in these cases $(2,1,1,0, \beta_1, \beta_2)+\delta$ will have repeated entries. It is immediate to see that all bundles appearing in the complex above satisfy this condition. Hence, $\mA_2|_X$ is acyclic.

 \subsubsection*{Step 3: cohomology of $\mA_1|_X$}
 In order to compute the cohomology of $\mA_1|_X$ we first need to tensor complex \ref{koszulLR} with $\mA_1$.

 We get the following complex, after decomposing the plethysms in irreducibles:

\begin{equation}\label{kosza1}
 0 \to \Sigma_{2,2}\mQ (-7) \to  \Sigma_{2,2}\mQ \otimes (\Sym^3 \mU)(-4) \to  \Sigma_{2,2}\mQ \otimes ((\Sym^4 \mU)(-2)  \oplus \of(-4) ) \to\end{equation}
\[\to   \Sigma_{2,2}\mQ \otimes (\Sym^3 \mU)(-1) \to \mA_1 \to \mA_1|_X \to 0.
\]

 We start from left to right. Since all of these bundles are of the form $\Sigma_{2,2}\mQ \otimes \Sigma_{\beta_1, \beta_2} \mU$, any factor with $\beta_1 \in \lbrace 6,5,2,1 \rbrace$ or $\beta_2 \in \lbrace 7,6,3,2 \rbrace$ will be acyclic.
 
 In particular, the factors $ \Sigma_{2,2}\mQ (-7) $, $\Sigma_{2,2}\mQ \otimes (\Sym^4 \mU)(-2)$, $ \Sigma_{2,2}\mQ (-1) $ are acyclic, while the other factors will all have some cohomology. 
 
 Let us start from $\Sigma_{2,2}\mQ \otimes (\Sym^3 \mU)(-4) $. We use the BBW algorithm as in \cite[4.1.4, 4.1.5, 4.1.9]{wey03}, following its notations.The number of disorders of the partition $\alpha=(2,2,0,0,7,4)$ (or the \emph{length} in the Weyl group) is 6. 
 In fact, the Weyl group of $SL_n$ is the symmetric group $\Sigma_n$, and the dotted action of a simple reflection $\sigma_i=(i, i+1)$ on $\lambda=(\lambda_1, \ldots, \lambda_n)$ is $\sigma ^\cdot(\lambda):=\sigma (\lambda+\delta)-\delta$, which translates as $(\lambda_1, \ldots, \lambda_{i+1}-1, \lambda_i+1, \ldots \lambda_n)$. The minimum number of exchanges needed to bring this partition to a non-increasing one (hence the \emph{number of disorders}) is 6. The cohomology group is given by the representation associated to the partition $\beta$, which is defined as $\beta=\sigma^\cdot (\alpha)$, for the unique $\sigma$ such that $\beta$ is non-increasing. In this case them $\beta=(3,3,3,2,2,2)$.
 
 This means that the only non-zero cohomology group of this bundle is $$H^6(\Gr(2,6),\Sigma_{2,2}\mQ \otimes (\Sym^3 \mU)(-4) ) \cong \W^3 V_6^{\vee},$$ (see e.g. \cite[Example 2.1.17]{wey03}) which is of course 20-dimensional.

 If we proceed with our analysis, the next bundle which is not acyclic is 
    $\Sigma_{2,2}\mQ(-4) )$. We immediately see by running the BBW algorithm on the corresponding partition, that we have
    $$
    H^4(\Gr(2,6), \Sigma_{2,2}\mQ(-4)) \cong \CC.
    $$
    Finally, we are left with $\Sigma_{2,2}\mQ \otimes (\Sym^3 \mU)(-1)$. Again, we can check that we have a 20-dimensional cohomology in degree 2: to be precise,
     $$
    H^2(\Gr(2,6),\Sigma_{2,2}\mQ \otimes (\Sym^3 \mU)(-1)) \cong \W^3 V_6^{\vee}.
    $$

    In order to finish the proof, we just need to compute the cohomology of $ \Sigma_{2,2}\mQ(-1)|_X $ from the sequence \ref{kosza1}. In general, to compute the cohomology on $X$ of the restriction of any vector bundle $\mathcal{B}$ on $\Gr(2,6)$ there is a spectral sequence, see \cite[\S2, 11]{bor} $$E_1^{-q,p}= H^p(\Gr(2,6), \W^q \Sym^3 \mU \otimes \mathcal{B}) \Rightarrow H^{p-q}(X, \mathcal{B}|_X).$$
    In this case, the situation is particularly simple: the couples $(p,q)$ for which there is cohomology are $(3,6),(2,4),(1,2)$: hence each $H^i(X, \mA_1|_X)$ has contribution from exactly one of these values. In particular,  $$H^1(X, \mA_1|_X) \cong H^3(X, \mA_1|_X) \cong \W^3 V_6^{\vee}, \ \ H^2(X, \mA_1|_X)\cong \CC.$$

    This concludes the proof, since $\mA_2|_X$ is acyclic, and therefore $H^p(X,End^0(\mF)) \cong H^p(X,\mA_1|_X)$.
    \end{proof}

Finally, we have now to prove that $\mF$ defined as above is slope-stable. For this, we use the following:
\begin{lemma} \label{prop:stability1} Let $X$ be generic in its polarized moduli space, $\mF$ as above. Then $\mF$ is slope-stable.  
\end{lemma}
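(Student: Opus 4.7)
The plan is to combine the slope-stability of $\mE$ already established by O'Grady with the preservation of slope-polystability under tensor product in characteristic zero, and then upgrade polystability to stability by verifying that $\mF$ is simple.

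By \cite[Thm.~1.4]{og19}, $\mE = \mQ|_X$ is slope-stable on $X$ with respect to the Pl\"ucker polarization. Via the Donaldson--Uhlenbeck--Yau / Kobayashi--Hitchin correspondence (or, algebraically, via Ramanan--Ramanathan), slope-stable implies polystable, and the tensor product of two polystable torsion-free sheaves of the same slope is again polystable. Hence $\mE\otimes\mE$ is slope-polystable on $X$, and its direct summand $\mF = \W^2\mE$ (cut out by the antisymmetrization projector, which is well-defined in characteristic zero) inherits slope-polystability.

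To upgrade polystability to stability, it is enough to verify that $\mF$ is simple, i.e.\ $H^0(X,End(\mF)) = \CC$. Using the decomposition \ref{decomposition}, one has $\mF\otimes\mF^{\vee} \cong \Sigma_{2,2}\mQ(-1)|_X \oplus \Sigma_{2,1,1}\mQ(-1)|_X \oplus \of_X$, so simpleness reduces to the vanishing $H^0(\Sigma_{2,2}\mQ(-1)|_X) = H^0(\Sigma_{2,1,1}\mQ(-1)|_X) = 0$. Tensoring the Koszul resolution \ref{koszulLR} of $\of_X$ in $\Gr(2,6)$ with each of these two homogeneous bundles and applying Borel--Bott--Weil term by term, all non-vanishing cohomology on the ambient Grassmannian sits in cohomological degrees strictly positive enough to force the $E_1$-page contribution to $H^0(X,-)$ to vanish; this is a much abridged version of the computations carried out in Steps~1--3 of the proof of Theorem \ref{thm:ext}. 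A slope-polystable simple sheaf is automatically slope-stable, since any nontrivial decomposition into isotypic components would produce nonscalar projection endomorphisms. This yields slope-stability of $\mF$.

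The main technical subtlety is the invocation of polystability of tensor products; although standard in characteristic zero, it relies either on the existence of Hermite--Einstein metrics (available here via the hyperk\"ahler structure on $X$) or on the algebraic theorem of Ramanan--Ramanathan, and one must ensure the isotypic projector onto $\W^2\mE$ descends to a genuine direct summand with the expected slope. The role of the genericity hypothesis on $X$ is to guarantee that no further ``accidental'' destabilizing subsheaf appears at non-generic points of the moduli space of polarized HK fourfolds.
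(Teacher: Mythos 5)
Your argument is correct and is essentially the paper's own proof: the paper likewise deduces polystability of $\mF=\W^2\mE$ from the slope-stability of $\mE$ via the tensor-product/exterior-power theorem (citing \cite[3.2.10]{hl10} and \cite{don85,don87,uy86}), and then upgrades to stability using the simplicity $ext^0(\mF,\mF)=1$ obtained from the cohomology computation. The only cosmetic difference is that you pass through $\mE\otimes\mE$ and the antisymmetrization projector rather than quoting the result for exterior powers directly, and the paper phrases the genericity hypothesis as ensuring the Picard group is cyclic.
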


\begin{proof}
Since $X$ is generic, we can assume that its Picard group is cyclic, generated by the restricted Pl\"ucker polarization $h$. Our $\mF$ is the second exterior power of $\mE$, which is slope-stable by \cite[8.6]{og19}. Since it is the exterior power of a slope-stable bundle, by \cite[3.2.11]{hl10}, $\mF$ is polystable, hence direct sum of stable vector bundles of the same slope, see also \cite{don85,don87, uy86}. On the other hand, from the computation above one deduce that $\mF$ is simple, since $ext^0(\mF, \mF)= 1$. Hence, $\mF$ has in fact only one summand, and is therefore slope-stable.

\end{proof}

Putting all of these together, we have the following corollary:
\begin{corollary} \label{recap1} The vector bundle $\mF$ on $X$ is slope-stable, modular and non-rigid. Its ext-table is given in Thm \ref{thm:ext}.
\end{corollary}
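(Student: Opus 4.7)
The plan is to simply assemble the three pieces of information that have already been established in the previous results. The statement is a recap corollary, so no new machinery is needed: one only needs to verify that each claim in it is a direct consequence of a previously proved statement.

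First, I would address modularity. Since $\mE = \mQ|_X$ is modular on $X$ (by \cite{og19} and recalled in the discussion following \eqref{discr}), and $\mF = \W^2 \mE$, modularity of $\mF$ follows immediately from Proposition \ref{lem:modularlambda} with $r = 4$ and $p = 2$: the discriminant $\Delta(\mF)$ equals $\lambda_2 \Delta(\mE)$, which remains a multiple of $c_2(X)$.

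Next, I would invoke Lemma \ref{prop:stability1} for slope-stability of $\mF$. That lemma already combines polystability (from the slope-stability of $\mE$ via \cite[3.2.10]{hl10}) with the simpleness $\mathrm{ext}^0(\mF,\mF) = 1$ derived from the cohomological computation, so no further work is required.

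Finally, non-rigidity and the ext-table are precisely the content of Theorem \ref{thm:ext}: the computation there yields $H^1(X, End^0(\mF)) \cong H^3(X, End^0(\mF)) \cong \W^3 V_6^\vee$ of dimension $20$, $H^2(X, End^0(\mF)) \cong \CC$, and the remaining groups vanish. Combined with $\mathrm{ext}^p(\mF,\mF) = h^p(End^0(\mF)) + h^p(\of_X)$ and the standard Hodge numbers of a K3$^{[2]}$-type hyperk\"ahler, this produces the full ext-table displayed in Theorem \ref{thm:ext}. Since $H^1(X, End^0(\mF))$ is $20$-dimensional, $\mF$ is non-rigid. As the corollary is a straightforward summary, there is no main obstacle: the whole work is already contained in Proposition \ref{lem:modularlambda}, Lemma \ref{prop:stability1} and Theorem \ref{thm:ext}.
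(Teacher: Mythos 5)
Your proposal is correct and follows exactly the paper's route: the corollary is a pure assembly of Proposition \ref{lem:modularlambda} (modularity), Lemma \ref{prop:stability1} (slope-stability) and Theorem \ref{thm:ext} (non-rigidity and the cohomology of $End^0(\mF)$), with the ext-table recovered via the splitting $\mF \otimes \mF^{\vee} \cong End^0(\mF) \oplus \of_X$ and the Hodge numbers of a K3$^{[2]}$-type hyperk\"ahler, just as in the remark following Theorem \ref{thm:ext}. No gaps.
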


We can immediately compute the numerical invariants of $\mF$, for example its Chern character and total Chern class.

\begin{proposition} \label{prop:chern} Let $h=c_1(\of_X(1))$, with the latter denoting the restriction of the Pl\"ucker line bundle. The total Chern class and character of $\mF$ are given by 
\begin{equation}
    \ch(\mF)= 6 +3h +\frac{1}{4} \left( 3h^2-c_2(X) \right)-\frac{1}{20} h c_2(X) -\frac{c_4(X)}{h_1^4} \end{equation}
    \[
    c(\mF)= 1+3h+ \left(\frac{c_2(X)+15h^2}{4}\right)+2c_2(X)h + \frac{7}{12}c_4(X).
\]
\end{proposition}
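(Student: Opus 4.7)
The plan is to bootstrap from the Chern character of $\mE$ on $X$, which is obtained by restricting $\mQ$ from $\Gr(2,6)$, and then pass to $\W^2\mE$ by combining Lemma \ref{lem:chernwedge} in low degrees with the splitting principle in higher degrees. First I would compute $\ch(\mE)$: from the tautological sequence $0\to \mU\to \mO^6\to\mQ\to 0$ on $\Gr(2,6)$ one has $\ch(\mQ)=6-\ch(\mU)$, and since $c_1(\mU^\vee)=h$ restricts to the Pl\"ucker class, the first two graded pieces of $\ch(\mE)$ are immediate, $\ch_0(\mE)=4,\ \ch_1(\mE)=h$. The piece $\ch_2(\mE)$ is determined by $c_2(\mQ)|_X$, which, together with $h^2$, can be traded against $c_2(X)$ via the known relation on the Fano variety of lines (since $X\subset\Gr(2,6)$ is cut out by $\Sym^3\mU^\vee$, the relation $c_2(X)$ vs.\ $h^2,\ c_2(\mE)$ follows from the conormal sequence and the splitting of the Chern class of $\Sym^3\mU^\vee$).

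With $\ch(\mE)$ in hand, the degree 1 and 2 parts of $\ch(\mF)$ come immediately from Lemma \ref{lem:chernwedge} applied with $r=4,\ p=2$: $\ch_1(\mF)=\binom{3}{1}\ch_1(\mE)=3h$ and $\ch_2(\mF)=\tfrac12\binom{2}{0}\ch_1^2(\mE)+\binom{2}{1}\ch_2(\mE)=\tfrac12 h^2+2\ch_2(\mE)$, which after substituting the expression for $\ch_2(\mE)$ and simplifying gives the claimed $\tfrac14(3h^2-c_2(X))$. For $\ch_3(\mF)$ and $\ch_4(\mF)$ I would extend the splitting-principle argument already used in the proof of Lemma \ref{lem:chernwedge}: writing $\mE\equiv\bigoplus L_i$ formally with $x_i=c_1(L_i)$, one has $\ch(\W^2\mE)=\sum_{i<j}e^{x_i+x_j}$, and expanding to degrees 3 and 4 produces
\[
\ch_3(\W^2\mE)=\tbinom{r-2}{1}\ch_3(\mE)+\tbinom{r-2}{0}\ch_1(\mE)\ch_2(\mE),
\]
with a parallel but longer expression in degree 4; these are symmetric polynomials in the $x_i$ and so rewrite as polynomials in the $\ch_k(\mE)$.

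The next step is to translate $\ch_3(\mE),\ch_4(\mE)$ into intersection-theoretic data on $X$. On the Fano fourfold $X$ the cohomology ring is generated (as an algebra over the invariants $c_2(X),c_4(X)$) by $h$, and the relations between $h^2,\ h\cdot c_2(X),\ h^2 c_2(X),\ c_2(X)^2$ and $c_4(X)=e(X)$ are classical; they can be read off from the Koszul resolution \eqref{koszul} via a Hirzebruch--Riemann--Roch or Chern-class-of-tangent-bundle computation on $X\hookrightarrow\Gr(2,6)$. Substituting these identities into the formulas from the previous step gives the stated expression for $\ch(\mF)$. Finally, $c(\mF)$ is obtained from $\ch(\mF)$ by Newton's identities applied up to degree 4, using that the terms of degree $>4$ vanish on a fourfold.

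The main obstacle is the combinatorial bookkeeping in degrees 3 and 4: there one must combine the splitting-principle expansion for $\W^2\mE$ with the restriction relations expressing the pullbacks of higher Schubert classes in terms of $h^k$ and $c_{2k}(X)$. Everything else is either a direct application of Lemma \ref{lem:chernwedge} or a standard conversion between $\ch$ and $c$.
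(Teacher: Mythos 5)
Your strategy is in substance the same as the paper's: compute $\ch(\mF)$ by the splitting principle applied to $\W^2$ of the rank-$4$ bundle $\mE=\mQ|_X$, express the result in the Chern classes $h_i=c_i(\mQ|_X)$, and then convert to $h$ and the even Chern classes of $X$ via the normal sequence (the paper records exactly the relations $c_2(X)=-3h^2+8h_2$, $c_2(X)h=10h_3$, $c_4(X)=18h_2h_3$ and delegates the bookkeeping to Macaulay2). So there is no methodological difference to report; the issue is with the one higher-degree formula you actually wrote down.

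Expanding $\sum_{i<j}e^{x_i+x_j}$ in degree $3$ for a rank-$r$ bundle gives
\[
\ch_3\bigl(\W^2 F\bigr)=(r-4)\,\ch_3(F)+\ch_1(F)\,\ch_2(F),
\]
because $\sum_{i<j}\bigl[(x_i^3+x_j^3)+3(x_i^2x_j+x_ix_j^2)\bigr]=(r-1)p_3+3(p_1p_2-p_3)=(r-4)p_3+3p_1p_2$. Your coefficient $\binom{r-2}{1}=r-2$ in front of $\ch_3(F)$ is therefore wrong: for $r=4$ the correct coefficient is $0$, not $2$. A quick sanity check: with formal roots $x_1=1$, $x_2=x_3=x_4=0$ one gets $\ch_3(\W^2 E)=\tfrac12$, whereas your formula yields $\tfrac56$. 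The discrepancy $2\ch_3(\mE)$ does not vanish on $X$ (in Schubert terms $\ch_3(\mQ)=\tfrac16(\sigma_3-\sigma_{2,1})$, and the restrictions of $\sigma_3$ and $\sigma_{2,1}$ to $X$ are independent because $h^3$ and $c_2(X)h$ are), so the error propagates to a wrong $\ch_3(\mF)$ and hence a wrong final answer. The correct value is $\ch_1(\mE)\ch_2(\mE)=\tfrac12 h^3-h h_2=-\tfrac12 h_3=-\tfrac1{20}c_2(X)h$, matching the statement and the paper's intermediate expression $\ch(\mF)=6+3h+(\tfrac32h^2-2h_2)-\tfrac12h_3-\tfrac16h_2h_3$. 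The same care is needed in degree $4$, which you did not write out explicitly. The rest of your outline (degrees $\le 2$ via Lemma \ref{lem:chernwedge}, the conversion between the $h_i$ and the $c_{2i}(X)$, and Newton's identities to pass from $\ch(\mF)$ to $c(\mF)$) is sound.
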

\begin{proof}
We perform a standard computation using the splitting principle and the normal sequence for $X$. Notice that the Chern character up to degree 2 is determined by \ref{lem:chernwedge}. 

If we denote by $h_i:= c_i(\mQ|_X)$, (hence $h=h_1$) we first obtain the expression (of independent interest)

\[
\ch(\mF)= 6+3h+ \left( \frac{3}{2}h^2-2h_2 \right)- \frac{1}{2}h_3- \frac{1}{6} h_2h_3.
\]
On the other hand, we can express the $h_i$ in terms of the (even) Chern classes of $X$. For example, we have
\[
c_2(X)=-3h^2+8h_2; \ \ c_2(X)h=10 h_3; \ \ c_4(X)=18h_1h_3.
\]
Hence $\frac{3}{2} h^2- \frac{c_2(X)+3h^2}{4}=\frac{3h^2-c_2(X)}{4}.$ Notice also that $\frac{h_2h_3}{6}=\frac{c_4(X)}{108}$, but 108 is exactly the degree of $X$ with respect to the Pl\"ucker embedding, i.e. $h^4$, see e.g. \cite[6.2-(ii)]{huy}.
We point out that these computations can be sped up and verified using Macaulay2, \cite{m2}. We add the required input for sake of completeness.
\begin{verbatim}
G=flagBundle({2,4});
B=bundles G;
N=symmetricPower(3, dual B_0);
X=sectionZeroLocus(N);
F=exteriorPower(2,B_1 **OO_Y);
ch(F)
chern(2,tangentBundle X)
chern(4,tangentBundle X)
chern(2, tangentBundle X)*H_(2,1)
\end{verbatim}

The strategy is the same in the case of the total Chern class, starting from the expression
\[
1+3h+(3h^2+2h_2)+20 h_3+ \frac{21}{2}h_2h_3,
\]
and performing the same manipulations.
\end{proof}

\begin{remark} \label{rmk:dv}
The proof just concluded shows how to produce a non-rigid slope-stable modular vector bundle starting from a rigid one on a hyperk\"ahler. Another example of such a (rigid) vector bundle on a HK is given by the restriction of the quotient bundle $\mQ|_Z$ on the Debarre-Voisin hyperk\"ahler $Z=(\Gr(6,10), \W^3\mU^{\vee})$, see \cite[Thm 1.4, Rmk 1.5]{og19} for a proof of the rigidity. If we consider once again $\W^2 \mQ|_Z$ we get the same cohomologies as our $\mF$ on the varieties of lines $X \subset \Gr(2,6)$. The computations are analogous to the case above (but longer) and we will spare them to the reader. This fact may seems a striking coincidence, but we expect that these two examples should in fact be the same. In fact, the projectified of the (restricted) quotient bundle on $X$ deforms on the projectified of the restricted quotient bundle on $Z$. More generally, it should hold that the projectivization of slope-stable vector bundles of the same rank considered in \cite{og19} belong to a unique relative family on the moduli space of hyperk\"ahler manifolds.
    
\end{remark}
\section{Another non-rigid example.}
The example in the above section was produced with the purpose of constructing a slope-stable, non-rigid modular vector bundle with $\Ext^2(\mF, \mF)$ as small as possible as explained in the introduction, see also \cite{bot22,markman}. If one drops the last requirement, it is interesting to ask how easy is to produce examples of non-rigid, slope-stable, modular vector bundles. In fact, this seems to be the case, and Proposition \ref{lem:modularlambda} seems to be the key to produce other examples.
The first idea is to iterate the construction behind Theorem \ref{thm:ext}. In what follows, $X$ is always the Fano variety of lines on a cubic 4-fold. Using the same notations as above, consider on $X$ the rank 15 vector bundle  $\mK:= \W^2 \mF$. By definition, Proposition \ref{lem:modularlambda} applies to this case as well. It is still irreducible , and in particular we have the isomorphism
\[
\mK \cong \Sigma_{2,1,1}\mQ|_X.
\]

As before, also the bundle $\mK$ is slope-stable: in fact we have the following proposition.
\begin{lemma} \label{prop:stability2}
Let $X$ be generic in its polarized moduli space, $\mK$ as above. Then $\mK$ is slope-stable.  \end{lemma}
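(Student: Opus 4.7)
The plan is to imitate the proof of Lemma \ref{prop:stability1} almost verbatim, now using $\mF$ in place of $\mE$ as the input stable bundle. Assume $X$ is generic in its polarized moduli space, so that $\Pic(X)=\ZZ\langle h\rangle$ with $h$ the restricted Pl\"ucker class; slope stability is then computed with respect to $h$. By Lemma \ref{prop:stability1}, $\mF$ is slope-stable with respect to $h$, and by construction $\mK = \W^2 \mF$.

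Next, I would invoke \cite[3.2.10]{hl10} (or equivalently the Donaldson--Uhlenbeck--Yau theorem \cite{don85,don87,uy86}): any tensor, exterior or symmetric power of a slope-stable bundle is slope-polystable. Applied to $\mF$, this yields that $\mK = \W^2 \mF$ is slope-polystable with respect to $h$. In particular, $\mK$ splits (possibly after taking the associated-graded of a Jordan--H\"older filtration) as a direct sum of slope-stable bundles of the same slope.

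It then suffices to upgrade polystability to stability by showing that $\mK$ is simple, i.e.\ $\ext^0(\mK,\mK)=1$. This is the point where the cohomological input from the planned computation for $\mK$ (parallel to the one carried out in Theorem \ref{thm:ext} for $\mF$, and summarized in the main theorem) enters: once we know $h^0(X, End(\mK)) = 1$, a polystable decomposition $\mK \cong \bigoplus_i \mK_i^{\oplus n_i}$ into non-isomorphic stable summands $\mK_i$ would force $\ext^0(\mK,\mK) \geq \sum_i n_i^2 \geq 2$ unless there is a single summand appearing with multiplicity one. Hence $\mK$ itself must be slope-stable.

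The only place where new work is required is the simpleness input $\ext^0(\mK,\mK)=1$; since the full $\ext$-table for $\mK$ is established elsewhere in the paper via the Koszul/Borel--Bott--Weil strategy of Theorem \ref{thm:ext} applied to the Littlewood--Richardson decomposition of $\mK \otimes \mK^\vee = \Sigma_{2,1,1}\mQ \otimes \Sigma_{2,1,1}\mQ^\vee$, we may simply quote it. The main conceptual obstacle is therefore not in the stability argument itself but in verifying that the irreducible summands of $\mK \otimes \mK^\vee$ contribute a one-dimensional $H^0$ coming solely from the trace factor $\of_X$; this is where the genericity of $X$ (ensuring the Koszul resolution of $\of_X$ inside $\Gr(2,6)$ is exact) is used in the cohomological step, and it percolates into the present lemma through the hypothesis on $X$.
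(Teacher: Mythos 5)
Your proposal matches the paper's argument: the paper proves Lemma \ref{prop:stability2} by declaring it ``the same as'' Lemma \ref{prop:stability1}, i.e.\ $\mK=\W^2\mF$ is polystable as an exterior power of the slope-stable bundle $\mF$ by \cite[3.2.10]{hl10} (via \cite{don85,don87,uy86}), and simpleness $\ext^0(\mK,\mK)=1$ from the cohomology computation upgrades polystability to stability. Your extra remark spelling out why a simple polystable bundle must be stable is a correct (and slightly more explicit) rendering of the same step.
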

\begin{proof}
The proof is the same as the one in Lemma \ref{prop:stability1}.
   
\end{proof}

Let us compute the invariants of $\mK$.
\begin{proposition}

    The vector bundle $\mK$ has rank 15 and degree 15. Its total Chern class/character are given by the following expressions:

\begin{equation}
    \ch(\mK) = 15+15h+ \left( \frac{15h^2-2c_2(X)}{2} 
    \right) + \frac{c_2(X) h}{2}+ \frac{c_4(X)}{2h^4},
\end{equation}

\[
c(\mK)= 1+15h+(105h^2+c_2(X))+286 c_2(X)h+\frac{2995}{6} c_4(X).
\]
 
\end{proposition}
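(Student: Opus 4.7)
The plan is to mirror the computation of Proposition~\ref{prop:chern}: apply Lemma~\ref{lem:chernwedge} (and its higher-degree extensions obtained by the same splitting-principle argument) to $\mK = \W^2 \mF$, then simplify the result in $H^*(X)$ using the cohomological relations already established on $X$.

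First I would read off the rank $\binom{\mathrm{rk}\,\mF}{2} = \binom{6}{2} = 15$. Lemma~\ref{lem:chernwedge} with $r = 6$, $p = 2$ gives immediately $\ch_1(\mK) = \binom{5}{1}\,\ch_1(\mF) = 15h$ (whence the degree $15$) and $\ch_2(\mK) = \tfrac{1}{2}\ch_1(\mF)^2 + 4\,\ch_2(\mF) = \tfrac{9h^2}{2} + \bigl(3h^2 - c_2(X)\bigr) = \tfrac{15h^2 - 2c_2(X)}{2}$, after substituting the values of $\ch_i(\mF)$ from Proposition~\ref{prop:chern}.

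For the higher components I extend the splitting-principle argument of Lemma~\ref{lem:chernwedge}: writing $\mF$ formally as $\bigoplus_{i=1}^{6} M_i$ with $m_i = c_1(M_i)$, one has $\ch(\mK) = \sum_{i<j} e^{m_i + m_j}$. Extracting the degree-$k$ component and rewriting the power sums via $p_\ell = \ell!\,\ch_\ell(\mF)$ yields
\[
\ch_3(\mK) = 2\,\ch_3(\mF) + \ch_1(\mF)\,\ch_2(\mF), \qquad \ch_4(\mK) = -2\,\ch_4(\mF) + \ch_1(\mF)\,\ch_3(\mF) + \tfrac{1}{2}\,\ch_2(\mF)^2.
\]
Substituting from Proposition~\ref{prop:chern} and using the cohomological identities already exploited there (namely $c_2(X)\,h = 10\,h_3$ and $c_4(X) = 18\,h_2 h_3$), together with the fact that for generic $X$ with $\mathrm{Pic}(X) = \ZZ h$ the subspace $H^{3,3}(X)$ is one-dimensional (so that the algebraic class $h^3$ is proportional to $h c_2(X)$, the ratio being pinned down by integration against $h$), collapses the expressions for $\ch_3(\mK)$ and $\ch_4(\mK)$ to the stated $\tfrac{h c_2(X)}{2}$ and $\tfrac{c_4(X)}{2h^4}$.

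Finally I would pass from $\ch(\mK)$ to $c(\mK)$ by the standard Newton-identities conversion: $c_1 = \ch_1$, $c_2 = \tfrac{1}{2}(\ch_1^2 - 2\ch_2)$, $c_3 = 2\ch_3 - \ch_1\ch_2 + \tfrac{1}{6}\ch_1^3$, and $c_4 = \tfrac{1}{4}(c_1 p_3 - c_2 p_2 + c_3 p_1 - p_4)$. Substituting and reducing once more via the cohomology ring of $X$ produces the stated total Chern class. The main obstacle is not conceptual but arithmetic: the large coefficients in $c_3(\mK)$ and $c_4(\mK)$ emerge from cancellations between several sizable terms, so the safest route is to carry out the final simplification symbolically in \cite{m2}, as suggested for the analogous Proposition~\ref{prop:chern}.
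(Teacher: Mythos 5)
Your proposal is correct and follows essentially the same route as the paper: a splitting-principle computation of the Chern character followed by the coordinate change of Proposition \ref{prop:chern} (the paper works directly in terms of $h_i=c_i(\mQ|_X)$ and obtains $\ch(\mK)=15+15h_1+(\tfrac{21}{2}h_1^2-8h_2)+5h_3+\tfrac{1}{12}h_1h_3$ before converting, whereas you go through $\ch(\mF)$; your degree-$3$ and degree-$4$ formulas for $\ch(\W^2\mF)$ and the resulting values check out). One small wording fix: what is one-dimensional for generic $X$ is the space of \emph{Hodge} classes in $H^6(X,\QQ)$ (the image of $\mathrm{NS}(X)\otimes\QQ$ under cup product with $c_2(X)$), not $H^{3,3}(X)$ itself, which has dimension $21$ for K3$^{[2]}$-type; this is the fact implicitly underlying the paper's relation $c_2(X)h=10h_3$ as well.
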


\begin{proof}
     As before, we denote  $h_i:=c_i(\mQ|_X)$. We first compute the Chern character and class using these invariants. 
    \[
    \ch(\mK)=15+15h_1+ \left(\frac{21}{2}h_1^2-8 h_2\right)+5 h_3 + \frac{1}{12}h_1h_3.
    \]
         \[
c(\mK)= 1+ 15h_1 + (102 h_1^2 +8 h_2)+2860 h_3+8985 hh_3
    \]
    Then we apply the same coordinate changes already used in Proposition \ref{prop:chern} to obtain the above result. As in the case of Proposition \ref{prop:chern}, one can also use \cite{m2}, with minimal changes to the code already provided.
\end{proof}

We can replicate the computations of Theorem \ref{thm:ext}, and obtain the following result:

\begin{theorem} \label{thm:ext2}
    Set $\mK =\W^2 (\W^2 \mE)$ on $X$, using the above notation. Then $\mK$ is modular, but non-rigid. In particular, if we define $End^0(\mK)$ as $(\mK \otimes \mK^{\vee}) / \of_X$, we have \[ H^p(X, End^0(\mK)) \cong
    \begin{cases}
         20 \ (p=1,3) \\
        190 \ \ (p=2) \\
        0 \ \ \textrm{otherwise}.
    \end{cases}\]

    In fact, we have the following isomorphism:
    \[
 H^*(X, End^0(\mK)) \cong H^*(\End^0 \mF) \oplus (\Sigma_{2,2,1,1} V_6^{\vee} \oplus \CC )[2]
    \]
    In particular, we have that $$\W^2 \Ext^1(\mK, \mK) \cong H^2(End^0(\mK)),$$
    since the first space is also isomorphic to
   $ 
\W^2 (\W^3 V_6^{\vee}) \cong \Sigma_{2,2,1,1} V_6^{\vee} \oplus \CC.    
    $
\end{theorem}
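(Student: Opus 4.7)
The argument follows the template of the proof of Theorem \ref{thm:ext}, with heavier representation-theoretic bookkeeping. First, I would decompose $\mK \otimes \mK^\vee$ into irreducible $GL(\mQ)$-homogeneous summands by applying Littlewood--Richardson to $\Sigma_{(2,1,1,0)}\mQ \otimes \Sigma_{(2,1,1,0)}\mQ^\vee$; equivalently, organise this as a direct sum of $\Sym^2 \mK$ and $\W^2 \mK$ suitably twisted by powers of $\det\mU$, in the spirit of \eqref{symwedgedecomposition}. Among the resulting irreducibles, three -- namely $\mA_1|_X$, $\mA_2|_X$ and $\of_X$ -- already appear in $\mF \otimes \mF^\vee$, so their contribution to $H^*(X, End^0(\mK))$ is entirely given by Theorem \ref{thm:ext}; the remaining summands are new and must be analysed from scratch.

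Next, for each new irreducible $\mathcal{B}$, I would tensor the Koszul resolution \eqref{koszulLR} with $\mathcal{B}$, expand each term via Littlewood--Richardson, and run Borel--Bott--Weil on every factor $\Sigma_{\alpha'}\mQ \otimes \Sigma_{\beta'}\mU$: such a factor is acyclic unless $(\alpha',\beta')+\delta$ (with $\delta=(5,4,3,2,1,0)$) has distinct entries, in which case it concentrates in a single cohomological degree and contributes the Weyl module of the sorted weight. Feeding this into the Koszul spectral sequence
\[
E_1^{-q,p}=H^p(\Gr(2,6),\, \W^q\Sym^3\mU\otimes \mathcal{B})\Rightarrow H^{p-q}(X,\mathcal{B}|_X),
\]
one reads off $H^*(X,\mathcal{B}|_X)$ term by term. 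The expectation, matching the stated table, is that every new summand is acyclic on $X$ except exactly two, one contributing $\Sigma_{(2,2,1,1)}V_6^\vee$ and one contributing $\CC$, both concentrated in degree $2$. Combined with Theorem \ref{thm:ext}, this yields the direct sum decomposition $H^*(X,End^0(\mK)) \cong H^*(X,End^0(\mF))\oplus (\Sigma_{(2,2,1,1)}V_6^\vee \oplus \CC)[2]$.

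For the final isomorphism $\W^2 \Ext^1(\mK,\mK)\cong H^2(End^0(\mK))$, I would use that $\Ext^1(\mK,\mK) = H^1(End^0(\mK)) \cong \W^3 V_6^\vee$ as a $GL(V_6)$-module (by the $H^1$ output of the previous step), and then expand the plethysm via characters or Littlewood--Richardson to get $\W^2(\W^3 V_6^\vee)\cong \Sigma_{(2,2,1,1)}V_6^\vee \oplus \det(V_6^\vee) \cong \Sigma_{(2,2,1,1)}V_6^\vee \oplus \CC$. Comparison with the previous paragraph gives an isomorphism of $GL(V_6)$-representations of matching total dimension, and identifying irreducible components concludes the proof.

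The main obstacle is purely clerical: enumerating the new irreducible summands of $\mK \otimes \mK^\vee$ and verifying, by Borel--Bott--Weil across every term of the tensored Koszul complex, that all but the two expected ones are acyclic on $X$. This is mechanical but long, and runs exactly parallel to Steps 2--3 of the proof of Theorem \ref{thm:ext}. The representation-theoretic identity $\W^2\W^3 V_6^\vee \cong \Sigma_{(2,2,1,1)}V_6^\vee \oplus \CC$ is what guides the shape of the answer and -- once the cohomology is identified with it -- gives the unobstructedness statement for free.
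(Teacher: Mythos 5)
Your strategy is exactly the paper's: decompose $\mK\otimes\mK^\vee$ into irreducible homogeneous summands, observe that $\Sigma_{2,2}\mQ(-1)$, $\Sigma_{2,1,1}\mQ(-1)$ and $\of$ already occur in $\mF\otimes\mF^\vee$, handle the new summands by tensoring the Koszul resolution with each and running Borel--Bott--Weil through the spectral sequence, and finish with the plethysm $\W^2(\W^3 V_6^\vee)\cong\Sigma_{2,2,1,1}V_6^\vee\oplus\CC$. This is the proof in the paper.

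One bookkeeping point where your stated expectation differs from what the computation actually yields. The new summands are $\Sigma_{4,2,2}\mQ(-2)$, $\Sigma_{3,1}\mQ(-1)$ and $\Sigma_{3,3,2}\mQ(-2)$, and only the \emph{first} is non-acyclic on $X$, contributing $\Sigma_{2,2,1,1}V_6^\vee$ (of dimension $189$) in degree $2$. The second copy of $\CC$ in $H^2(End^0(\mK))$ is not produced by a new summand: it is $H^2(X,\Sigma_{2,2}\mQ(-1)|_X)\cong\CC$, already counted inside $H^2(End^0(\mF))$. Your prediction of \emph{two} new non-acyclic summands, one giving $\Sigma_{2,2,1,1}V_6^\vee$ and one giving $\CC$, would produce $h^2(End^0(\mK))=1+189+1=191$ rather than the stated $190$. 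Once you actually run Borel--Bott--Weil you will find only the single new contribution, and the totals close up as $190=189+1$. (The displayed isomorphism in the theorem statement should accordingly be read with the extra $\CC$ identified with the degree-$2$ part of $H^*(End^0(\mF))$ rather than added to it; as literally written it double-counts that line.)
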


\begin{proof}

   The proof is similar in nature to the proof of Theorem \ref{thm:ext}. We first need to decompose of $\mK \otimes \mK^{\vee}$ in irreducibles. Such a decomposition is:
 \[
 \mK \otimes \mK^{\vee} \cong \Sigma_{4,2,2}\mQ(-2) \oplus \Sigma_{3,1}\mQ(-1) \oplus \Sigma_{3,3,2}\mQ(-2)\oplus \Sigma_{2,2}\mQ(-1) \oplus \Sigma_{2,1,1}\mQ(-1) \oplus \of
 \]

We can replicate the Borel-Bott-Weil and Koszul computation of Theorem \ref{thm:ext} and check that $\Sigma_{3,1}\mQ(-1)|_X$, $\Sigma_{3,3,2}\mQ(-2)|_X$ (and of course also $\Sigma_{2,1,1}\mQ(-1)|_X$) are acyclic.

Excluding $\of_X$, there are two other factors appearing in the decomposition above. One of them is $\Sigma_{2,2}\mQ(-1)|_X$, the one already appearing in Theorem \ref{thm:ext}. This is in turn responsibile for $\Ext^p(\mK, \mK)$ for $p=1,3$.

The other cohomology group is given by $\Sigma_{4,2,2} \mQ(-2)|_X$, where the factor $\W^2 \Sym^3 \mU \otimes \Sigma_{4,2,2} \mQ(-2) \cong (\Sym^4 \mU (-1) \oplus \of(-3)) \otimes \Sigma_{4,2,2} \mQ(-2)$ contains $\Sigma_{4,2,2} \mQ \otimes \Sym^4 \mU (-3)$ that cohomology in degree 4 isomorphic to $\Sigma_{2,2,1,1}V_6^{\vee}$. This implies that $H^2(X, \Sigma_{4,2,2} \mQ(-2)|_X) \cong H^4(\Gr(2,6), \Sigma_{4,2,2} \mQ \otimes \Sym^4 \mU (-3) ) \cong \Sigma_{2,2,1,1}V_6^{\vee}$.
 
In particular, it follows that $\Ext^1(\mK, \mK) \cong \W^3 V_6^{\vee}$, whose second exterior power is $ \Sigma_{2,2,1,1} V_6^{\vee} \oplus \CC$. 
On the other hand, we have that $\Ext^2(End^0(\mK)) \cong H^2(X, \Sigma_{4,2,2} \mQ(-2)|_X) \oplus H^2(X, \Sigma_{2,2}\mQ(-1)|_X) \cong \Sigma_{2,2,1,1} V_6^{\vee} \oplus \CC$. Therefore, we get
\[
\W^2 \Ext^1(\mK, \mK) \cong \Ext^2(End^0(\mK)),
\]
which concludes the proof.

\end{proof}

The above example is quite interesting. In fact, since $\mK$ is projectively hyperholomorphic, then \cite{mo23} implies that $\mK$ has a formal DG Lie algebra. Moreover, the isomorphism $\W^2 \Ext^1(\mK, \mK) \cong H^2(End^0(\mK)),$ implies that the Yoneda product is skew-symmetric and therefore that the Kuranishi space is smooth.

\begin{remark}
    As in the case of $\mF$, we can consider the restriction of the same Schur functor on the Debarre-Voisin example, and obtain the same results. However, the same reasoning of remark \ref{rmk:dv} also applies here. 
\end{remark}

\begin{remark}
One might wonder what happens if we keep on raising $\mF$ or $\mK$ to the second or higher wedge powers. Unsurprisingly, they quickly becomes reducible: this is for example the case of $\W^2\W^2 \W^2 \mQ|_X=\W^2\mK$, that decomposes as $\Sigma_{4,2,1,1}\mQ|_X \oplus \Sigma_{3,3,2}\mQ|_X \oplus \Sigma_{3,2,2,1}\mQ|_X$. Of course, one can consider a single irreducible factor and try to replicate the construction: for example, considering the ext-cohomology $\Ext^{\bullet}(\Sigma_{4,2,1,1}\mQ|_X,\Sigma_{4,2,1,1}\mQ|_X)$ we get always a 20-dimensional $ext^1$, and a quite large $ext^2$. The same happens for $\Sym^2 \W^2 \mE$, and other cases. For example, the vector bundle $\Sigma_{2,2}\mQ|_X$, which appears in the decomposition of $\Sym^2 (\Sym^2 \mQ|_X)$, has 20-dimensional $ext^1$ and 590-dimensional $ext^2$. Of course the modularity Proposition \ref{lem:modularlambda} does not apply anymore, and one should come up with a similar formula, but we do not necessarily see the case for it. One could argue that this is in fact a general phenomenon, with these bundle obtained from linear algebra which are in general non-rigid, with a higher dimensional $ext^2$ everytime.
 
\end{remark}

\bibliographystyle{alpha}

\end{document}